\newcommand{\Tt}[1]{\mathbf{#1}}
\newcommand{\rev}[1]{#1}
\newcommand{\revb}[1]{#1}
\newcommand{\revc}[1]{\textcolor{black}{#1}}
\begin{document}
\title{Impact of spatial coarsening on Parareal convergence \rev{for the linear advection equation}}
\titlerunning{Impact of spatial coarsening on Parareal convergence}
%
\author{Judith Angel\inst{1}\orcidID{0009-0008-2098-4883} \and
Sebastian Götschel\inst{1}\orcidID{0000-0003-0287-2120} \and
Daniel Ruprecht\inst{1}\orcidID{0000-0003-1904-2473}}
\authorrunning{Angel, Götschel and Ruprecht}
%
\institute{Chair Computational Mathematics, Institute of Mathematics, Hamburg University of Technology, 21073 Hamburg, Germany.\\
\email{\{judith.angel,sebastian.goetschel,daniel.ruprecht\}@tuhh.de}}
\maketitle              
\begin{abstract}
The Parareal parallel-in-time integration method often performs poorly when applied to hyperbolic partial differential equations.
This effect is even more pronounced when the coarse propagator uses a reduced spatial resolution.
However, some combinations of spatial discretization and numerical time stepping nevertheless allow for Parareal to converge with monotonically decreasing errors.
This raises the question how these configurations can be distinguished theoretically from those where the error initially increases, sometimes over many orders of magnitude.
For linear problems, we prove a theorem that implies that the 2-norm of the Parareal iteration matrix is not a suitable tool to predict convergence for hyperbolic problems when spatial coarsening is used.
We then show numerical results that suggest that the pseudo-spectral radius can reliably indicate if a given configuration of Parareal will show transient growth or monotonic convergence.
For the studied examples, it also provides a good quantitative estimate of the convergence rate in the first few Parareal iterations.

\keywords{Parareal  \and hyperbolic PDE \and parallel-in-time integration \and spatial coarsening \and pseudo-spectrum \and pseudo-spectral radius}
\end{abstract}
\section{Introduction}\label{sec:intro}
To use the rapidly increasing number of processing units in modern high-perfor\-mance computers, numerical algorithms need to offer as many levels of parallelism as possible. 
Numerical time-stepping in simulations that involve the approximate solution of time-dependent differential equations has become a serial bottleneck.
Parallel-in-time integration methods like Parareal~\cite{LionsEtAl2001}, PFASST~\cite{EmmettMinion2012} or MGRIT~\cite{FalgoutEtAl2014_MGRIT} have been proposed as alternatives that can, potentially, extend the scaling limits of purely spatial parallelization.
Performance of these ``parallel-across-the-steps'' methods~\cite{Gear1988} depends on the proper choice of one or multiple coarse level models. 
These have to ensure rapid convergence of the method but need to be computationally cheap, since they run in serial.

While it is well-known that convergence of Parareal is often poor for hyperbolic problems~\cite{GanderVandewalle2007_SISC,Ruprecht2018}, in particular in combination with spatial coarsening~\cite{Ruprecht2014_GAMM}, this is not always the case.
 \rev{DeSterck et al. derive optimized coarse propagators for MGRIT by minimizing the difference in the spectrum compared to the fine propagator~\cite{DeSterckEtAl2021} that deliver good convergence for linear advection.} 
\rev{A comprehensive theoretical explanation based on characteristics of how Dirichlet boundary conditions can enable linear convergence of Parareal for linear advection was given by Gander~\cite{Gander2008}.}
But, at the moment, \rev{it is still difficult to predict a-priori} whether a given Parareal configuration converges monotonically or with initial growth of error.
\rev{Note that there are other parallel-in-time methods that perform well for hyperbolic problems.
Examples are ParaDiag~\cite{GanderEtAl2021}, revisionist integral deferred corrections (RIDC)~\cite{OngEtAl2016}, ParaExp~\cite{GuttelGander2013} or parallel spectral deferred corrections (PSDC)~\cite{CaklovicEtAl2025}.
A recent monograph by Gander and Lunet provides a comprehensive overview~\cite{GanderLunet2024}.}

Throughout this paper, we use Parareal to solve the linear advection equation
\begin{equation}
    \label{eq:advection}
    u_t + U u_x = 0
\end{equation}
\rev{with periodic boundary conditions} for $x \in [0,1]$, $t \in [0,1]$ and $U=1.0$.
Table~\ref{tab:configs} shows four configurations of Parareal, two using finite-difference based discretizations and two using a spectral discretization provided by the Dedalus software~\cite{BurnsEtAl2020}.
All four use some degree of spatial coarsening.
Even though the four configurations all solve~\eqref{eq:advection}, they converge very differently.
While A converges quickly, B and C show substantial transient growth of error while D converges monotonically but slowly.
This raises the question which theoretical tools can help to predict how a given configuration of Parareal converges.

\revc{The upper four graphs in }\rev{Figure~\ref{fig:sol} show the solutions provided by the four configurations after $P-1 = 9$ iterations for initial value $u_0(x) = \sin(2 \pi x) + \sin(8 \pi x)$.
Rapid convergence of A comes at the price of heavy numerical damping \revb{while B suffers from substantial phase error}.
By contrast, \revb{C and D} produce approximations that are in agreement with the analytic solution.
\revc{The lower four graphs in Figure~\ref{fig:sol} show the produced solutions for a discontinuous initial value. Only configuration D produces a useful solution.
Configurations B and C generate massive oscillations while configuration A is heavily diffusive.}
However, recall that the error propagation matrix derived and analyzed throughout the paper does not depend on the choice of initial value.}
\revc{The produced numerical solutions are shown here to give the reader an impression of how the investigated configurations behave and which produce useful numerical approximations.}

For linear problems, Parareal can be written as a stationary linear iteration~\cite{AmodioBrugnano2009}.
One classical tool to investigate convergence of such iterations is the spectral radius~\cite{Kelley1995}.
However, the Parareal iteration always converges in a finite number of iterations, although speedup is only possible if the number \rev{of} required iterations is very small.
This means that the iteration matrix for a linear Parareal iteration is nil-potent.
Therefore, its spectral radius is always zero and not useful to differentiate between fast and slow convergence.
Another approach is to look at the norm of the iteration matrix.
If it is smaller than unity, monotonic convergence is guaranteed.
However, we will show that for Parareal with spatial coarsening applied to linear problems with imaginary eigenvalues, the 2-norm is always larger than unity in the limit $\delta t \to 0$ and therefore also not useful.
We show that for linear problems, the pseudo-spectrum, introduced by Trefethen and Embree~\cite{Trefethen2005}, of the Parareal iteration matrix is a reliable indicator of convergence.
\rev{In this paper, we consider only periodic boundary conditions since, for equidistant meshes, they result in circulant and thus normal finite difference matrices, which our theory requires.
Note, however, that the type of boundary condition for linear advection has significant impact on Parareal convergence~\cite{Gander2008}.}

\begin{table}[t!]
    \def\arraystretch{1.5}  
    \centering
    \begin{tabular}{|c|c|c|c|c|} \hline
                               & \multicolumn{4}{|c|}{Configuration} \\ \hline
                               & A & B & C & D \\ \hline
        Spatial discretization & Upwind FD & Centered FD& ~Spectral~ & ~Spectral~ \\
        Propagators            & ~Imp. Euler~ & ~Trapezoidal~ & RK443 & RK443 \\
        Numerical diffusion    &  Strong          & None        & Weak  & Weak   \\
        \rev{Spatial} resolution (fine/coarse)~ & 32/24        & 32/24       & 32/24 & 32/30  \\
        Interpolation & Linear & Linear & Cubic & Cubic \\
        $N_c$ & 1 & 1 & 10 & 10  \\
        $N_f$ & 10 & 10 & 10 & 10 \\
        $P$   & 10 & 10 & 10 & 10 \\ \hline
        Error matrix norm $\left\| \Tt{E} \right\|_2$ & \rev{1.34} & \rev{5.25} & 7.74 & 1.29 \\ \hline
        Error after $P-1$ iterations & $\rev{1.1 \times 10^{-3}}$ & \rev{$2.2 \times 10^1$} & ~$3.2 \times 10^{1}$~ & ~$3.0 \times 10^{-1}$~ \\  \hline
    \end{tabular}\vspace*{1em}
    \caption{Four different configuration\rev{s} of Parareal solving the linear advection equation~\eqref{eq:advection}. All use $T=1$ and $P=10$ time-slices. Configuration $A$ converges quickly, $B$ and $C$ show transient growth of error while $D$ converges monotonically but slowly. We demonstrate that the pseudo-spectral radius of the Parareal iteration matrix can predict this behavior while the norm or the spectral radius cannot. \rev{RK443 refers to the 3rd order 4-stage implicit-explicit method by Ascher et al.~\cite{AscherEtAl1997}.}}
    \label{tab:configs}
\end{table}

Section~\ref{sec:parareal} briefly introduces Parareal and explains how it can be written as a linear stationary iteration.
We characterize different forms of numerical diffusion and state the key theoretical result of this paper, a lower bound on the 2-norm of the Parareal iteration matrix.
Section~\ref{sec:pseudospec} summarizes a few key definitions and results regarding the pseudo-spectrum and pseudo-spectral radius of a matrix.
It then shows that the pseudo-spectra for the four configurations in Table~\ref{tab:configs} look distinctly different and that the pseudo-spectral radius gives a good estimate of the initial rate of convergence.
Section~\ref{sec:conc} gives a summary and draws conclusions.

\begin{figure}[pth!]
    \centering
    \includegraphics[scale=0.85]{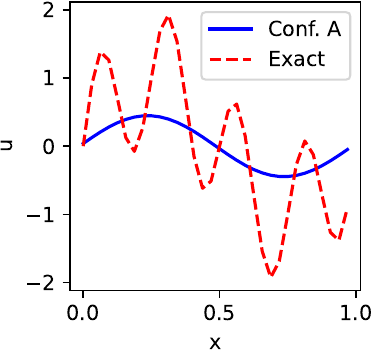}
    \includegraphics[scale=0.85]{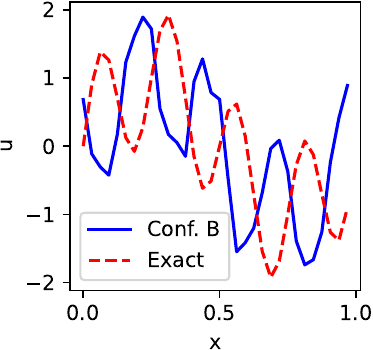}\newline
    \includegraphics[scale=0.85]{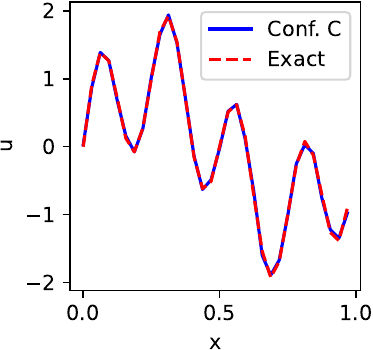}
    \includegraphics[scale=0.85]{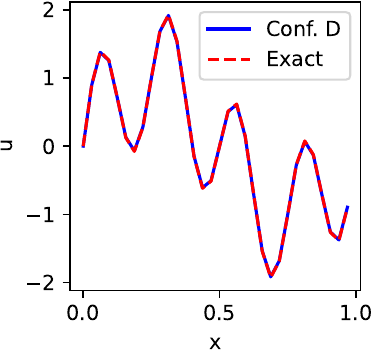}\newline
    \includegraphics[scale=0.85]{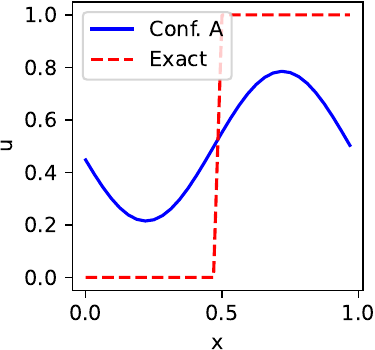}
    \includegraphics[scale=0.85]{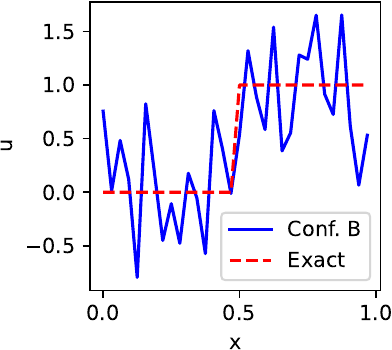}\newline
    \includegraphics[scale=0.85]{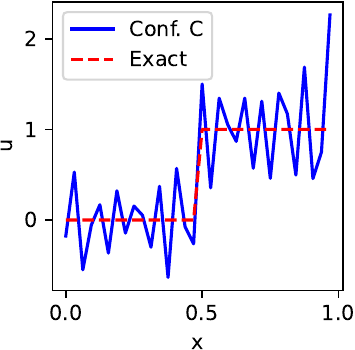}
    \includegraphics[scale=0.85]{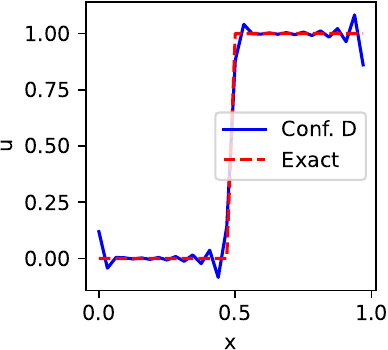}      
    \caption{\revc{Numerical solution (blue) generated by the four configurations shown in Table~\ref{tab:configs} after $P-1 = 9$ Parareal iterations and analytic solution (red) for \revb{$u_0(x) = \sin(2 \pi x) + \sin(8 \pi x)$} (upper four) and $u_0(x) = H(x-0.5)$ with $H$ being the Heaviside function (lower four). However, note that the Parareal iteration matrix defined and studied later does not depend on the initial value. \revb{The purpose of this figure is only to visualize the numerical properties of the four configurations}.}}
    \label{fig:sol}
\end{figure}

\section{Parareal with spatial coarsening for linear problems}\label{sec:parareal}
Consider a linear initial value problem
\begin{equation}
	\label{eq:linear_ivp}
	y'(t) = A y(t), \ y(0) = b, \ t \in [0, T],
\end{equation}
with $A \in \mathbb{C}^{n \times n}$, $b \in \mathbb{C}^n$.
In the numerical examples shown in this paper, \eqref{eq:linear_ivp} arises from the semi-discretisation of the linear advection equation~\eqref{eq:advection}, but the theoretical results generalize to other linear hyperbolic PDEs.
We decompose the time interval $[0,T]$ into $P$ so-called time-slices so that
\begin{equation}
	[0, T] = [0, t_1] \cup [t_2, t_3] \cup \ldots [t_{P-1}, t_{P}]
\end{equation}
with $t_P = T$.
Let $\mathcal{F}_{\delta t}$ and $\mathcal{G}_{\Delta t}$ be numerical one-step integration methods.
Parareal computes an approximate solution to~\eqref{eq:linear_ivp} via the iteration
\begin{equation}
	\label{eq:parareal_iteration}
	y^{k+1}_{j+1} = \mathcal{G}_{\Delta t}(y^{k+1}_j) + \mathcal{F}_{\delta t}(y^k_j) - \mathcal{G}_{\Delta t}(y^k_j)
\end{equation}
for $j=0, \ldots, P-1$. 
Since the values $y^k_j$  in~\eqref{eq:parareal_iteration} are known from the previous iteration, the computation of $\mathcal{F}_{\delta t}(y^k_j)$ can be parallelised across $P$ processing units.
By contrast, $\mathcal{G}_{\Delta t}(y^{k+1}_j)$ must be computed serially step-by-step.
Therefore, $\mathcal{F}_{\delta t}$ should be accurate but can be computationally expensive and is thus called the fine propagator.
By contrast, since $\mathcal{G}_{\Delta t}$ runs serially, it must be computationally cheap but can be inaccurate and is thus called the coarse propagator.
The iteration will eventually converge to the solution 
\begin{equation}
    \label{eq:fine_prop_serial}
    y_{j+1} = \mathcal{F}_{\delta t}(y_j)
\end{equation}
provided by running $\mathcal{F}_{\delta t}$ for $j=0, \ldots, P-1$ in serial.

\paragraph{Spatial coarsening.} When the initial value problem~\eqref{eq:linear_ivp} stems from the spatial discretization of a PDE, using a coarser spatial discretisation for $\mathcal{G}_{\Delta t}$ is an effective way to reduce computational cost of the coarse method.
In this case, some differential equation
\begin{equation}
    \label{eq:coarse_ivp}
    \tilde{y}'(t) = \tilde{A} \tilde{y}(t)
\end{equation}
with $\tilde{A} \in \mathbb{C}^{m \times m}$, $\tilde{y}(t) \in \mathbb{C}^m$ and $m < n$ is solved numerically on the coarse level. 
A restriction operator $R \in \mathbb{C}^{m \times n}$ transfers the solution from the fine to the coarse level and an interpolation operator $I \in \mathbb{C}^{n \times m}$ from the coarse to the fine~\cite{FischerEtAl2005}.
One application of the coarse method in Parareal then becomes
\begin{equation}
    \mathcal{G}_{\Delta t}(y) = I \tilde{\mathcal{G}}_{\Delta t}(R y)
\end{equation}
where $\tilde{\mathcal{G}}_{\Delta t}$ is the coarse method applied to~\eqref{eq:coarse_ivp}.

\subsection{Parareal as a linear stationary iteration}\label{subsec:parareal-iter}
For the linear problem~\eqref{eq:linear_ivp} and one-step methods as propagators we can write
\begin{equation}
   \mathcal{F}_{\delta t}(y) = R_f(\delta t A)^{N_f} y =: F y
\end{equation}
and
\begin{equation}
    \mathcal{G}_{\Delta t}(y) = I R_g(\Delta t \tilde{A})^{N_c} R y =: I \tilde{G} R y =: G y,
\end{equation}
where $R_f(z)$ and $R_g(z)$ are the stability functions, and $N_f, N_c$ are the number of time steps per time-slice for \rev{the} fine and coarse propagator.
This means that the action of both propagators on some vector can be expressed as multiplication with matrices $F \in \mathbb{C}^{n \times n}$ and $G \in \mathbb{C}^{n \times n}$.
In this case, it is straightforward to interpret Parareal as a stationary fixed point iteration~\cite{AmodioBrugnano2009}.
Application of the fine propagator directly via~\eqref{eq:fine_prop_serial} can be written as\footnote{We use bold face to indicate quantities that have been aggregated over all time-slices. For example, $y_j^k$ denotes the approximation at the beginning of time-slice $j$ in iteration $k$ whereas $\Tt{y}^k = \left( y^k_0, \ldots, y^k_P \right)$ is a vector containing the approximations from \emph{all} time-slices in iteration $k$.}
\begin{equation}
    \label{eq:fine_prop_system}
    \Tt{M}_f \Tt{y}_f := \begin{pmatrix} 1 \\ -F & 1 \\ & \ddots & \ddots \\ & & -F & 1 \end{pmatrix} \begin{pmatrix} y_0 \\ y_1 \\ \vdots \\ y_P \end{pmatrix} = \begin{pmatrix} b \\ 0 \\ \vdots \\ 0 \end{pmatrix}
\end{equation}
where $1$, in a slight abuse of notation, denotes the $n \times n$ identity matrix.
The Parareal iteration~\eqref{eq:parareal_iteration} becomes
\begin{equation}
    \label{eq:parareal_it_matrices}
   y^{k+1}_{j+1} = G y^{k+1}_j + F y^k_j - G y^k_j
\end{equation}
for $j=0, \ldots, P-1$ with $y^k_0 = 0$  and can be written compactly as
\begin{equation}
     \Tt{M}_g \Tt{y}^{k+1} = \left( \Tt{M}_g - \Tt{M}_f \right) \Tt{y}^k + \Tt{b}
\end{equation}
with $\Tt{M}_g$ defined analogously to $\Tt{M}_f$ in~\eqref{eq:fine_prop_system} and
\begin{equation}
    \Tt{b} = \begin{pmatrix} b \\ 0 \\ \vdots \\ 0 \end{pmatrix}.
\end{equation}

\begin{lemma}
Let $\Tt{y}_f$ be the serial fine solution of~\eqref{eq:fine_prop_system}.
Then, the iteration error $\Tt{e}^k := \Tt{y}_f - \Tt{y}^k$ is given by
\begin{equation}
	\label{eq:error}
    \Tt{e}^k = \Tt{E} \Tt{e}^{k-1} = \Tt{E}^k \Tt{e}_0
\end{equation}
with $\Tt{E} = \Tt{M}_g^{-1} \left( \Tt{M}_g - \Tt{M}_f \right) = \Tt{1} - \Tt{M}_g^{-1} \Tt{M}_f$.
\end{lemma}
\begin{proof}
\begin{align*}
	\Tt{e}^k &= \Tt{y}_f - \Tt{y}^k \\
		     &= \Tt{y}_f -  \left( \Tt{1}- \Tt{M}_g^{-1} \Tt{M}_f \right) \Tt{y}^{k-1} - \Tt{M}_g^{-1} \Tt{b} \\
		     &= \Tt{y}_f -   \left( \Tt{1}- \Tt{M}_g^{-1} \Tt{M}_f \right) \Tt{y}^{k-1} - \Tt{M}_g^{-1} \Tt{M}_f \Tt{y}_f \\
		     &= \left( \Tt{1} - \Tt{M}_g^{-1} \Tt{M}_f \right) \left( \Tt{y}_f - \Tt{y}^{k-1} \right) = \Tt{E} \Tt{e}^{k-1}.
\end{align*}
\end{proof}
%
%
\begin{lemma}\label{lemma:e}
The error propagation matrix is given by 
\begin{equation}
    \label{eq:e}
	\Tt{E} = \begin{pmatrix} 0 & \\ B_0 & 0 \\ B_1 & B_0 & 0 \\ & \ddots & \ddots & \ddots \\ B_{P-1} & \ldots & B_1 & B_0 & 0 \end{pmatrix}
\end{equation}
with
\begin{equation}
	B_k = G^k \left( F - G \right).
\end{equation}
\end{lemma}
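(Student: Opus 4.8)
The plan is to compute $\Tt{E} = \Tt{M}_g^{-1}(\Tt{M}_g - \Tt{M}_f)$ directly from the explicit block structure of the two factors. First I would observe that $\Tt{M}_f$ and $\Tt{M}_g$ share the same block diagonal (the identity) and differ only on the first subdiagonal, where $\Tt{M}_f$ carries $-F$ and $\Tt{M}_g$ carries $-G$. Hence the diagonals cancel and their difference is strictly lower block-bidiagonal,
\[
\Tt{M}_g - \Tt{M}_f = \begin{pmatrix} 0 \\ F-G & 0 \\ & \ddots & \ddots \\ & & F-G & 0 \end{pmatrix},
\]
so every block vanishes except the first subdiagonal, which equals $F - G = B_0$.

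Next I would invert $\Tt{M}_g$. Writing $\Tt{M}_g = \Tt{I} - \Tt{N}$, where $\Tt{N}$ is the nilpotent block matrix carrying $G$ on its first subdiagonal and zeros elsewhere, the finite Neumann series gives $\Tt{M}_g^{-1} = \sum_{j=0}^{P} \Tt{N}^j$. Since $\Tt{N}$ acts as a block down-shift composed with multiplication by $G$, the power $\Tt{N}^j$ carries $G^j$ on its $j$-th subdiagonal. Thus $\Tt{M}_g^{-1}$ is the lower block-triangular Toeplitz matrix whose $(i,k)$ block equals $G^{i-k}$ for $i \ge k$ and $0$ otherwise. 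In particular $\Tt{M}_g$ is invertible, being block lower triangular with identity diagonal blocks.

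The last step is the block multiplication. In the product $\Tt{M}_g^{-1}(\Tt{M}_g - \Tt{M}_f)$ the only nonzero blocks of the right factor sit on the first subdiagonal, so the $(i,j)$ block of the product picks out the single inner index $k = j+1$, giving $G^{i-j-1}(F-G)$ for $i > j$ and $0$ otherwise. Identifying the block on the $d$-th subdiagonal ($i - j = d \ge 1$) as $B_{d-1} = G^{d-1}(F-G)$ then yields exactly the claimed form of $\Tt{E}$.

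These calculations are all routine; the only place demanding care is the index bookkeeping in the two structured products, namely keeping the subdiagonal offsets of $\Tt{M}_g^{-1}$, of $\Tt{M}_g - \Tt{M}_f$, and of the resulting $\Tt{E}$ consistent, so that the exponent of $G$ comes out as $i-j-1$ rather than $i-j$. As an independent check I would instead verify the equivalent identity $\Tt{M}_g \Tt{E} = \Tt{M}_g - \Tt{M}_f$ by a one-line block computation: the $(i,j)$ block of $\Tt{M}_g \Tt{E}$ equals $\Tt{E}_{ij} - G\,\Tt{E}_{i-1,j}$, which telescopes to $B_{i-j-1} - G B_{i-j-2} = 0$ off the first subdiagonal and to $B_0 = F-G$ on it, matching $\Tt{M}_g - \Tt{M}_f$ block by block.
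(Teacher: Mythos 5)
Your proposal is correct and follows essentially the same route as the paper's proof: identify $\Tt{M}_g - \Tt{M}_f$ as the strictly lower block-bidiagonal matrix with $F-G$ on the first subdiagonal, write down the explicit block lower-triangular Toeplitz inverse $\Tt{M}_g^{-1}$ with blocks $G^{i-k}$, and multiply. The paper leaves the inverse as ``easy to confirm'' and the product as ``some matrix algebra,'' so your Neumann-series justification and the verification $\Tt{M}_g \Tt{E} = \Tt{M}_g - \Tt{M}_f$ simply fill in those details.
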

\begin{proof}
First, it is easy to confirm that
\begin{equation}
	\Tt{M}_g^{-1} = \begin{pmatrix} 1 \\ G & 1 \\  G^2 & G & 1 \\ \vdots & & \ddots & \ddots \\ G^{P-1} & \ldots & G^2 & G & 1 \end{pmatrix}.
\end{equation}
Then, some matrix algebra shows that
\begin{equation}
	\Tt{E} = \begin{pmatrix} 1 \\ G & 1 \\  G^2 & G & 1 \\ \vdots & & \ddots & \ddots \\ G^{P-1} & \ldots & G^2 & G & 1 \end{pmatrix} \begin{pmatrix} 0 & \\ F-G & 0 \\ & F-G & 0 \\ & & \ddots & \ddots \\ & & & F-G & 0 \end{pmatrix} 
\end{equation}
has the form shown above.
\end{proof}

\begin{remark}
\revb{Note that the convergence analysis presented in this paper depends only on the Parareal error propagation matrix $\mathbf{E}$ and is independent of $\mathbf{u_0}$. The contribution of the initial value and therefore $\mathbf{e}_0$ to $\mathbf{e}^k$ in~\eqref{eq:error} is not considered here.}
\end{remark}

\subsubsection{Increment and error.}
Since the error requires knowledge of the fine solution $\Tt{y}_f$, which is normally not available, a commonly used approach is to monitor converge of Parareal via the difference between two iterates
\begin{equation}
    \Tt{\Delta}^k := \Tt{y}^{k+1} - \Tt{y}^k.
\end{equation}
\rev{While the following relation between increment and error is well known in the context of stationary linear iterations~\cite[Theorem 11.1]{GanderEtAl2014}, it seems that for Parareal the link between increment, which can be used as stopping criterion, and error, which a user will try to control, seems to not have been widely studied.}
\begin{lemma}\label{lemma:increment}
 For the Parareal iteration~\eqref{eq:parareal_it_matrices}, solving the linear problem~\eqref{eq:linear_ivp}, it holds that
 \begin{equation}
 	\Tt{\Delta}^k = \Tt{e}^{k+1} - \Tt{e}^k = \Tt{E}^k \left( \Tt{E} - 1 \right) \Tt{e}_0.
 \end{equation}
\end{lemma}
\begin{proof}
Using~\eqref{eq:error} we have
\begin{equation}
	\Tt{\Delta}^k = \Tt{y}^{k+1} - \Tt{y}_{f} + \Tt{y}^f - \Tt{y}^k = \Tt{e}^{k+1} - \Tt{e}^k = \Tt{E}^k \left( \Tt{E} - 1 \right) \Tt{e}_0.
\end{equation}
\end{proof}
 This implies that if $\left\| \Tt{E} \right\| < 1$, defect and error contract at the same rate since
 \begin{equation}\label{eq:increment_rate}
 	\left\| \Tt{\Delta}^k \right\| \leq C \left\| \Tt{E} \right\|^k
 \end{equation}
 and
 \begin{equation}\label{eq:error_rate}
 	\left\| \Tt{e}^k \right\| \leq \tilde{C} \left\| \Tt{E} \right\|^k
 \end{equation}
 for constants $C$, $\tilde{C}$.
 However, Lemma~\ref{lemma:increment} also means that if the error for some time-slice does not change in an iteration, the corresponding defect will be zero.
 This raises the possibility that there might be scenarios where checking the defect for convergence will give a ``false positive'' result where the defect is small and the iteration stops although the error is actually large. 
 Investigating this is left for future work.

\subsubsection{Norm of $\Tt{e}^k$ versus norm of $\Tt{E}^k$ versus $\left\| \Tt{E} \right\|_2^k$.}
Using~\eqref{eq:error}, we can bound the Parareal error by
\begin{equation}
	\left\| \Tt{e}^k \right\| = \left\| \Tt{E}^k \Tt{e}^0 \right\| \leq \left\| \Tt{E}^k \right\| \left\| \Tt{e}^0 \right\| \leq \left\| \Tt{E} \right\|^k \left\| \Tt{e}^0 \right\|
\end{equation}
if $\left\| \Tt{E}^k \right\|$ is the matrix norm associated with the vector norm $\left\| \Tt{e}^k \right\|$.
Remember that $\Tt{E}$ is nil-potent with $\Tt{E}^P = 0$.
Therefore, its spectral radius is zero and the error always goes to zero asymptotically.
However, for Parareal to provide speedup, we need the error to contract fast and in particular we want it to decrease monotonically.
\rev{The error bounds derived by Gander and Vandewalle for hyperbolic problems like~\eqref{eq:advection} allow $\left\| \Tt{e}^k \right\|$ to grow over many orders of magnitude in the first few iterations before beginning to converge as $k$ approaches $P$\rev{~\cite[Fig. 5.1]{GanderVandewalle2007_SISC}}.
While they do not observe this growth in their numerical examples, later work by Gander et al. demonstrates that transient error growth can indeed occur~\cite[Fig. 3]{GanderEtAl2023c}.}
However, as configurations A and D in Table~\ref{tab:configs} show, even for hyperbolic PDEs there can be convergence.
This can be guaranteed theoretically if $\left\| \Tt{E} \right\| < 1$.
But, since this is only a sufficient condition, it is possible that the norm of $\Tt{E}^k$ decreases monotonically even if $\left\| \Tt{E} \right\| > 1$.
In these cases, $\left\| \Tt{E}^k \right\|$ decreases even though $\left\| \Tt{E} \right\|^k$ increases.
If this is the case, $\left\| \Tt{E} \right\|$ provides no information about the initial convergence of Parareal.
This can also be seen in Table~\ref{tab:configs} where all four configurations have $\left\| \Tt{E} \right\|_2 > 1$, even though two converge monotonically and two do not.
The slowly converging configuration D even has a slightly smaller norm than the quickly converging configuration A.

\subsection{Numerical and physical diffusion}
\begin{figure}[t!]
	\centering
	\includegraphics[scale=.925]{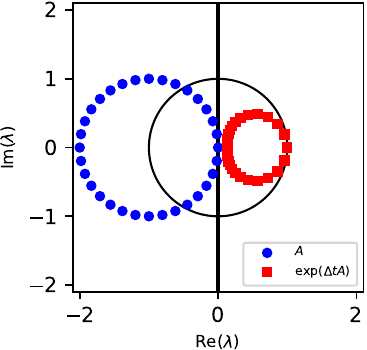}\hspace*{2.5em}
	\includegraphics[scale=.925]{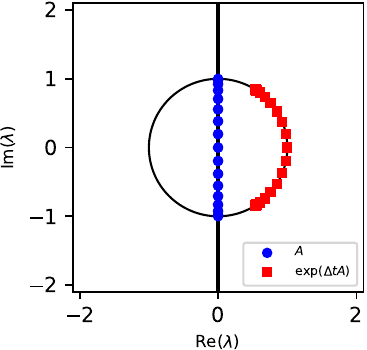}\\[1em]
	\includegraphics[scale=.925]{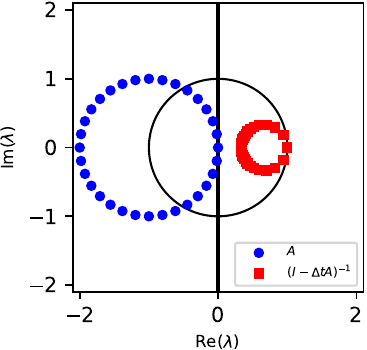}\hspace*{2.5em}
	\includegraphics[scale=.925]{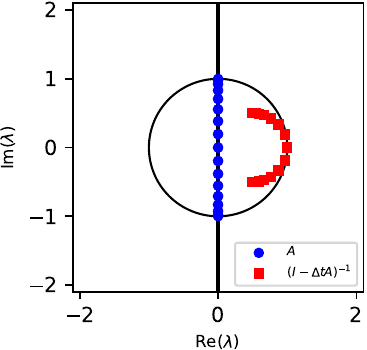}	
	\caption{Blue circles show eigenvalues of $A$ for a first order upwind (upper and lower left) and second order centered (upper and lower right) spatial finite difference discretisation of the linear advection equation. Red squares show eigenvalues of the exact time propagation operator (upper) and implicit Euler propagator (lower). Blue dots to the left of the imaginary axis indicate spatial numerical diffusion. Red squares inside the unit circle indicate numerical diffusion. The upper left configuration has spatial diffusion, the lower left spatial and temporal diffusion, the lower right has only temporal diffusion while the upper right has no diffusion.}
	\label{fig:eigs}
\end{figure}
We assume that no eigenvalue of $A$ in~\eqref{eq:linear_ivp} has positive real part \rev{so that the problem has no growing solutions}.
The exact propagator to advance the initial value problem~\eqref{eq:linear_ivp} in time from $t_j$ to $t_{j+1} = t_j + \delta t$ is
\begin{equation}
	\label{eq:exact_step}
	y(t_{j+1}) = \exp(A \delta t) y(t_j).
\end{equation}
Eigenvalues of $A$ with negative real part give rise to exponentially decaying solutions.
If this is a property of the problem, like in the heat equation, we call this effect \emph{physical diffusion}.
If the original problem is non-diffusive, like the advection equation~\eqref{eq:advection} studied here, and the negative real parts come from the employed spatial discretisation, we refer to it as \emph{spatial numerical diffusion}.
In this case, diffusion is purely an artifact of the numerics and will \rev{diminish} as the spatial resolution is refined.
An example would be an upwind finite difference discretisation of~\eqref{eq:advection} with periodic boundary conditions~\cite[Eq. (3.35)]{Durran2010}, \rev{leading to a semi-discrete initial value problem~\eqref{eq:linear_ivp} where $A$ encodes the finite difference stencils.}
The \rev{fully continuous} advection equation is non-diffusive but the eigenvalues of $A$ have negative real-part, see Figure~\ref{fig:eigs} (upper left).
Consequently, \rev{even when integrating the semi-discrete problem exactly}, the eigenvalues of $\exp(A \delta t)$ lie inside the unit circle, which will result in amplitudes going to zero as $t \to \infty$.
By contrast, for the non-diffusive centered finite difference approximation~\cite[(Eq. (3.28)]{Durran2010}, all eigenvalues of $A$ \rev{in the resulting semi-discrete initial value problem} lie on the imaginary axis and thus the eigenvalues of $\exp(A \delta t)$ are located on the unit circle so that amplitudes are preserved, see Figure~\ref{fig:eigs} (upper right).

Numerical diffusion can also be introduced by the time stepping scheme.
The fully discrete solution does not evolve according to~\eqref{eq:exact_step} but to
\begin{equation}
	\label{eq:numerical_step}
	y_{j+1} = R(A \delta t) y_j,
\end{equation}
where $R$ is the stability function of the one-step method \rev{used} and $y_j \approx y(t_j)$ is the resulting numerical approximation.
Even when there is no spatial numerical diffusion and all eigenvalues of $A$ have real part equal to zero, $R(A \delta t)$ might have eigenvalues inside the unit circle, see Figure~\ref{fig:eigs} (lower right).
In that case, the numerical solution will also decay exponentially as $t \to 0$.
We call this \emph{temporal numerical diffusion} and it will vanish in the limit $\delta t \to 0$.
Of course, spatial and temporal diffusion can both be present, for example if upwind finite differences in space are combined with implicit Euler in time, see Figure~\ref{fig:eigs} (lower left).
Trefethen discusses these effects in more detail~\cite{Trefethen1996}.

\subsection{Lower bound for the Parareal iteration matrix norm}
The main theoretical result of this paper is the following theorem, \rev{which assumes that Parareal is applied to a linear initial value problem~\eqref{eq:linear_ivp} where the matrix $A$ is normal, that is, it satisfies $A A^* = A^* A$.}
\rev{\begin{remark}
Standard finite difference discretisations on equidistant meshes with periodic boundary conditions, for example, give rise to a matrix $A$ that is circulant and thus normal.
Furthermore, symmetric / Hermitian and skew-symmetric / skew-Hermitian matrices are normal.
For a comprehensive characterisation of normal matrices see e.g. the book by Horn and Johnson~\cite[Sec. 2.5]{HornJohnson2012} and the paper by Grone et al.~\cite{GroneEtAl1987}.
\end{remark}}

\begin{theorem}\label{thm:main}
Consider a linear initial value problem~\eqref{eq:linear_ivp} with a normal matrix $A$.
Let $\mathcal{F}_{\delta t}$ and $\mathcal{G}_{\Delta t}$ be one-step methods with rational stability functions $R_f(z)$ and $R_g(z)$. 
Assume that the coarse propagator $\mathcal{G}_{\Delta t}$ solves a coarsened linear initial value problem~\eqref{eq:coarse_ivp} with dimension $m < n$ and that interpolation and restriction operators $I \in \mathbb{C}^{n \times m}$ and $R  \in \mathbb{C}^{m \times n}$ are used.
Let $\lambda_1, \ldots, \lambda_n$ denote the eigenvalues of $A$, ordered such that $| R_f(\lambda_1 \delta t) | \geq | R_f(\lambda_2 \delta t) | \geq \ldots | R_f(\lambda_n \delta t) |$.
Finally, the fine method is assumed to be stable, so that $\left| R_f(\lambda_j \delta t) \right| \leq 1$ for $j=1,\ldots,n$.

Then, the 2-norm of Parareal's error propagation matrix is bounded from below by
\begin{equation}
	\label{eq:thm_bound}
	\left\| \Tt{E} \right\|_2 \geq \sqrt{ \sum_{j=m+1}^{n} \left| R_f(\lambda_j \delta t)^{N_f} \right|^2} \geq \left|  R_f(\lambda_{m+1} \delta t) \right|^{N_f}.
\end{equation}
\end{theorem}
\begin{remark}
The bound in Theorem~\ref{thm:main} is independent of the choice of coarse method, coarse time step or interpolation or restriction operator, that is, independent of $R_g$, $\Delta t$, $I$ or $R$. 
\end{remark}

%
%
\begin{corollary}\label{cor:1}
For a non-diffusive problem and a fine propagator without numerical diffusion, we have
\begin{equation}
	\left\| \Tt{E} \right\|_2 \geq 1.
\end{equation}
\end{corollary}
\begin{proof}
If there is no physical or numerical diffusion, all $R_f(\lambda_j \delta t)$ are located on the complex unit circle and therefore $| R_f(\lambda_j \delta t) | = 1$ for $j=1, \ldots, n$.
\end{proof}

%
%
\begin{corollary}\label{cor:2}
For a fine method of order $p$ we have
\begin{equation}
	\left\| \Tt{E} \right\|_2 \geq \left| \exp(\lambda_{m+1} \rev{\delta t N_f}) \right| + \mathcal{O}(\delta t^{p+1})
\end{equation}
as $\delta t \to 0$.
\end{corollary}
\begin{proof}
It holds that
\begin{equation}
	R_f(z) = \exp(z) + \mathcal{O}(|z|^{p+1}) \ \text{as} \ |z| \to 0
\end{equation}
where $p$ is the fine method's order of consistency~\cite[p. 42]{HairerEtAl1996_stiff}.
Therefore, using $z = \lambda_{m+1} \delta t$, we find
\begin{subequations}
\begin{align}
	\left\| \Tt{E} \right\|_2 &\geq \left| R_f(\lambda_{m+1} \delta t) \right|^{N_f} \\
		&= \left| \exp(\lambda_{m+1} \delta t) + \mathcal{O}(| \lambda_{m+1} \delta t |^{p+1} ) \right|^{N_f} \\
		&= \left| \exp(\lambda_{m+1} \delta t)^{N_f} \right| + \mathcal{O}(| \lambda_{m+1} |^{p+1} \delta t^{p+1}) \\
		&= \left| \exp(\lambda_{m+1} \rev{\delta t N_f}) \right| + \mathcal{O}(\delta t^{p+1}).
\end{align}
\end{subequations}
\end{proof}
\rev{Note that Corollary~\ref{cor:2} does not imply that $\left\| \Tt{E} \right\|_2$ gets asymptotically close to $\left| \exp(\lambda_{m+1} \delta t N_f) \right|$ as $\delta t \to 0$. The term $\mathcal{O}(\delta t^{p+1})$ merely allows for a small violation of the inequality for large $\delta t$.}
\begin{corollary}\label{cor:3}
For a non-diffusive problem and a spatial discretization without numerical diffusion, Corollary~\ref{cor:2} implies 
\begin{equation}
	\left\| \Tt{E} \right\|_2 = 1 + \mathcal{O}(\delta t^{p+1})
\end{equation}
as $\delta t \to 0$.
\end{corollary}
\begin{proof}
If problem and spatial discretization are non-diffusive, all eigenvalues $\lambda_m$ are purely imaginary.
Therefore, $\left| \exp(\lambda_{m+1} \delta t) \right| = 1$ for all $m$ and any $\delta t > 0$.
\end{proof}

\subsubsection{Implications of Theorem~\ref{thm:main}.}\label{subsec:implications}
The matrix $\Tt{E}$ is nil-potent with $\Tt{E}^P = 0$, reflecting the well-known fact that Parareal always converges when the number of iterations is equal to the number of time-slices.
Therefore, even if the norm of the error matrix is large, Parareal will still converge for any initial guess.
This is, however, not enough to make it useful: since $\Tt{M}_f$ has a lower diagonal block-structure, problem~\eqref{eq:fine_prop_system} can easily be solved by forward substitution, which corresponds to running the fine method in serial via~\eqref{eq:fine_prop_serial}.
Speedup from Parareal after $K$ iterations on $P$ time-slices/processors is bounded by
\begin{equation}
 	\label{eq:speedup}
	S(P) \leq \min \left\{ \frac{P}{K}, \frac{ \text{runtime of } \mathcal{F}_{\delta t} }{ \text{runtime of } \mathcal{G}_{\Delta t} } \right\}.
\end{equation}
For Parareal to deliver speedup $S(P) \gg 1$, it needs to converge in a number of iterations that is much smaller than the number of time-slices or $K \ll P$.
This can be guaranteed theoretically if
\begin{equation}
	\left\| \Tt{E} \right\| \ll 1
\end{equation}
in a suitable norm, see also the discussion by Buvoli and Minion~\cite{BuvoliEtAl2020}.
Theorem~\ref{thm:main} and its corollaries show that for hyperbolic problems and spatial coarsening, the 2-norm cannot be used to assess convergence.

Figure~\ref{fig:norm_vs_dt} shows the 2-norm of $\Tt{E}$ for four configurations of Parareal.
The upper left uses upwind FD and implicit Euler so it has both temporal and spatial diffusion.
Upper right uses centered FD and trapezoidal rule and thus has no numerical diffusion.
The lower left uses centered FD with implicit Euler and has only temporal diffusion.
Finally, the lower right uses the spectral discretetization and RK443 integrator from Dedalus and thus has weak temporal diffusion.
In all settings, the coarse and fine time step are identical and $\mathcal{G}$ and $\mathcal{F}$ differ only in their spatial resolution.
Without diffusion, we have $\left\| \Tt{E} \right\|_2 > 1$, independent of the spatial and temporal resolution of the coarse propagator.
The fine propagator has $n=32$ degrees-of-freedom.
Note how, without any diffusion, even for a coarse resolution of $m=30$ where the only difference between $\mathcal{G}$ and $\mathcal{F}$ is coarsening by two degrees-of-freedom, we have $\left\| \Tt{E} \right\|_2 > 1$ independent of time step, in line with Corollary~\ref{cor:1}.
Having strong temporal and spatial diffusion produces some situations where the norm is below unity, but only if the spatial resolution of the coarse is very close to that of the fine propagator.
If spatial diffusion is absent or very weak (lower figures), in line with Corollary~\ref{cor:3}, the norm of $\Tt{E}$ can dip slightly below unity but we recover $\left\| \Tt{E} \right\|_2 \geq 1$ in the limit $\delta t \to 0$.
\begin{figure}[h!t]
    \centering
    \includegraphics[scale=.925]{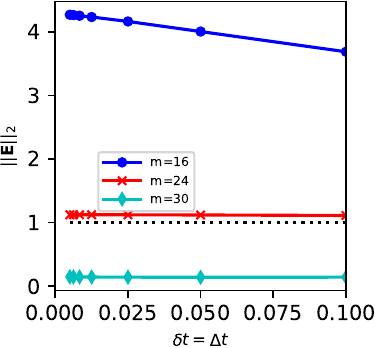}\hspace*{2.5em}
    \includegraphics[scale=.925]{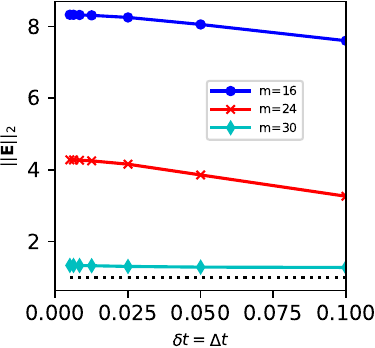}\\[1em]
    \includegraphics[scale=.925]{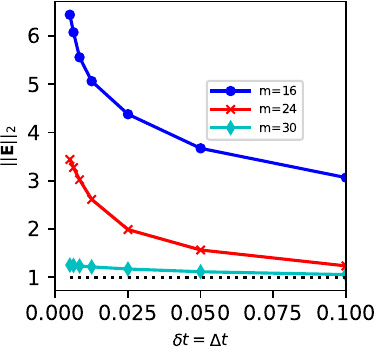}\hspace*{2.5em}
    \includegraphics[scale=.925]{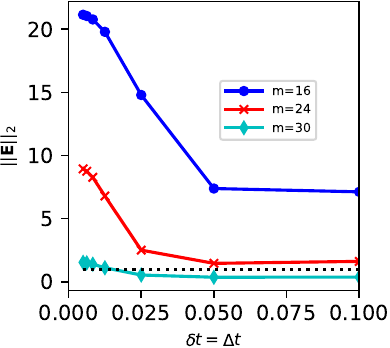}    
    \caption{\rev{Norm of the Parareal iteration matrix $\left\| \mathbf{E} \right\|$ for changing fine- and coarse time step size $\Delta t = \delta t$. Configuration A but with changing coarse spatial resolution (upper left), configuration B with changing coarse resolution (upper right), a combination of  centered finite difference and implicit Euler propagator and thus temporal but no spatial numerical diffusion (lower left) and configurations C/D with changing coarse spatial resolution (lower right). Note the changing scaling of the y-axes.}}
    \label{fig:norm_vs_dt}
\end{figure}


%
%
\subsubsection{Proof of Theorem~\ref{thm:main}}\label{subsec:proof}
\begin{proof}
Because $A$ is normal, it is unitarily diagonalizable~\cite[Theorem 2.5.3]{HornJohnson2012} so that
\begin{equation}
	A = U \Sigma U^*,
\end{equation}
where $U^{-1} = U^*$ and $\Sigma$ is a diagonal matrix.
Since we assume stability of the fine method, all eigenvalues of $A$ must be located away from the singularities of $R_f$ and thus~\cite[Theorem 6.2.9]{HornJohnson1991}
\begin{equation}
	R_f(A \delta t) = R_f(U \Sigma \delta t U^*) = U R_f(\Sigma \delta t) U^*.
\end{equation}
With this, the fine propagator becomes
\begin{equation}
	F =  R_f(A \delta t)^{N_f} = \left( U R_f(\Sigma \delta t) U^* \right)^{N_f} = U R_f(\Sigma \delta t)^{N_f} U^*.
\end{equation}
Therefore, $F$ is also unitarily diagonalizable with eigenvalues
\begin{equation}
	\mu_k = R_f(\lambda_k \delta t)^{N_f}.
\end{equation}
Since $B_0 = F - G$ is a sub-matrix of $\Tt{E}$, it holds that
\begin{equation}
	\left\| \Tt{E} \right\|_2 \geq \left\| B_0 \right\|_2 = \left\| F - G \right\|_2.
\end{equation}
Because $\tilde{G} \in \mathbb{C}^{m \times m}$, we know that
\begin{equation}
	\text{rank}(\tilde{G}) \leq m.
\end{equation}
Since $\text{rank}(AB) \leq \min\left\{ \text{rank}(A), \text{rank}(B) \right\}$ holds for any matrices $A$, $B$, we have
\begin{equation}
	\text{rank}(G) = \text{rank}(I \tilde{G} R) \leq m.
\end{equation}
Therefore, $G$ is a low-rank approximation of $F$ and by Eckart-Young-Mirsky theorem
\begin{equation}
	\label{eq:low_rank_error}
	\left\| F - G \right\|_2 \geq \sqrt{ \sum_{j=m+1}^n s_{j}^2  } \geq s_{m+1}\rev{,}
\end{equation}
where $s_1 \geq \ldots \geq s_n \geq 0$ are the singular values of $F$~\cite{EckartYoung1936,HornJohnson2012}.
Because $F$ is unitarily diagonalizable it must also be normal~\cite[Theorem 2.5.3]{HornJohnson2012}.
Therefore, its singular values are the absolute values of its eigenvalues~\cite[p. 157]{HornJohnson1991} so that
\begin{equation}
	\label{eq:parareal_lower_bound_sum}
	\left\| \Tt{E} \right\|_2 \geq \left\| F - G \right\|_2 \geq \sqrt{ \sum_{j=m+1}^n s_j^2} = \sqrt{ \sum_{j=m+1}^n \left| R_f(\lambda_j \delta t)^{N_f} \right|^2 }
\end{equation}
and in particular
\begin{equation}
	\label{eq:parareal_lower_bound}
	\left\| \Tt{E} \right\|_2 \geq \left\| F - G \right\|_2 \geq s_{m+1} = \left| \mu_{m+1} \right| = \left| R_f(\lambda_{m+1} \delta t)^{N_f} \right|.
\end{equation}
\end{proof}
\begin{remark}\label{remark:general_bound}
There is the more general lower bound~\cite[Eq. (3.5.32)]{HornJohnson1991}
\begin{equation}
	\left\| F -  G \right\|_2 \geq \max_{i=1, \ldots, n} \left| s_i(F) - s_i(G) \right|
\end{equation}
for the low rank approximation.
Since $G$ has rank $m$, we have $s_{m+1}(G) = \ldots = s_{n}(G) = 0$.
Therefore,
\begin{equation}
	\left\| F -  G \right\|_2 \geq \max_{i=1, \ldots, m} \left| R_f(\lambda_i \delta t)^{N_f} - R_g(\lambda_i \Delta t)^{N_c} \right|\rev{.}
\end{equation}
This might give a sharper lower bound but the other bound has the advantage of depending only on $m$.
\end{remark}
Note that error bounds for low-rank approximations in the infinity norm seem to be scarce~\cite{GillisShitov2019}.
For this reason, a similar bound for $\left\| \Tt{E} \right\|_{\infty}$ might be much harder to obtain.

\subsection{Systems with non-normal matrix}
The proof of Theorem~\ref{thm:main} relies heavily on the assumption that the matrix $A$ in~\eqref{eq:linear_ivp} is normal.
There are some specific setups where Parareal can converge monotonically for hyperbolic PDEs even with spatial coarsening.
Gander~\cite{Gander2008} shows that when applying Parareal to
\begin{align}
	u_t + a u_x &= f \ \ \text{in} \ (0,L) \times (0,T)  \\
	u(x,0) &= u^0(x) \ \ \text{for} \  x \in (0,L) \\
	u(0,t) &= g(t) \ \ \text{for} \ t \in (0,T)
\end{align}
with $a > 0$, using an upwind discretisation, it converges linearly even when spatial coarsening is used.
This case is not covered by our theorem since the finite difference matrix for non-periodic boundary conditions
\begin{equation}
	A = \frac{1}{\Delta x} \begin{pmatrix} 1 \\ -1 & 1 \\ & \ddots & \ddots \\  & & -1 & 1 \end{pmatrix}
\end{equation}
has a highly non-normal structure and the theorem does not apply.

\section{Pseudo-spectrum and pseudo-spectral radius}\label{sec:pseudospec}
This section first provides a brief introduction to pseudo-spectra, following the detailed description in the book by Trefethen and Embree~\cite{Trefethen2005}.
A review of the technique was provided by Trefethen~\cite{Trefethen2006}.
Second, it shows numerical examples that demonstrate that, for the four configurations in Table~\ref{tab:configs}, the pseudo-spectral radius of $\Tt{E}$ can reliably identify those where Parareal converges monotonically.

For some $\varepsilon > 0$, the $\varepsilon$-pseudo-spectrum $\sigma_{\varepsilon}$ of a matrix $\Tt{E}$ is the set
\begin{equation}
	\sigma_{\varepsilon}(\Tt{E}) = \left\{ z \in \mathbb{C} : \left\| (z - A)^{-1} \right\|_2 > \varepsilon^{-1} \right\}.
\end{equation}
Note that elements of the standard spectrum are those where $(z -  \Tt{E})^{-1}$ does not exist or, in the convention used in Trefethen and Embree~\cite{Trefethen2005}, where
\begin{equation}
	\left\| (z - A)^{-1} \right\|_2 = \infty.
\end{equation}
Throughout this paper, when computing pseudo-spectra, we will work with the following equivalent definition
\begin{equation}
	\sigma_{\varepsilon}(A) = \left\{ z \in \mathbb{C} : s_{\text{min}}(z - A ) < \varepsilon \right\}\rev{,}
\end{equation}
where $s_{\text{min}}$ denotes the smallest singular value of a matrix~\cite[p. 17]{Trefethen2005}.
Note that
\begin{equation}
	\label{eq:intersect_pseudo_spec}
	\bigcap_{\varepsilon > 0} \sigma_{\varepsilon}(\Tt{E}) = \sigma(\Tt{E}),
\end{equation}
that is, the intersection of all pseudo-spectra produces the classical spectrum of $\Tt{E}$~\cite[p. 15, Eq. (2.5)]{Trefethen2005}.

In analogy to the spectral radius, Trefethen and Embree define a pseudo-spectral radius.
\begin{definition}
The $\varepsilon$-pseudo-spectral radius of a matrix $A$ is given by
\begin{equation}
	\rho_{\varepsilon}(A) = \sup_{z \in \sigma_{\varepsilon}(A)} \left| z \right|
\end{equation}
\end{definition}
The following will greatly simplify the computation of $\rho_{\varepsilon}(\Tt{E})$ for the Parareal iteration matrix~\cite[Theorem 2.2]{Trefethen2005}:
\begin{remark}
For any matrix $\Tt{E} \in \mathbb{C}^{N \times N}$, 
\begin{equation}
	\sigma(\Tt{E}) + B_{\varepsilon} \subset \sigma_{\varepsilon}(\Tt{E})
\end{equation}
where
\begin{equation}
	B_{\varepsilon} := \left\{ z \in \mathbb{C} : \left| z \right| < \varepsilon \right\}.
\end{equation}
\end{remark}
Because the Parareal iteration matrix from Lemma~\ref{lemma:e} is nil-potent, we have $\sigma(\Tt{E}) = \left\{ 0 \right\}$ and thus $B_{\varepsilon} \subset \sigma_{\varepsilon}(\Tt{E})$.
\rev{Since the \texttt{scipy.optimize} package only provides a function \texttt{minimize}, we compute $\rho_{\varepsilon}(\Tt{E})$ by minimizing $1 / \left| z \right|^2$ instead of maximizing $\left| z \right|^2$ directly.}
Furthermore, we use the \texttt{NonlinearConstraint} option to ensure the solution satisfies the constraint that $\sigma_{\text{min}}\left( z \Tt{I} - \Tt{E} \right) = \varepsilon$ and $z$ is at the boundary of $\sigma_{\varepsilon}(\Tt{E})$.

There are two important results how $\rho_{\varepsilon}(\Tt{E})$ bounds the norm of powers of $\Tt{E}$.
First, it provides an upper bound~\cite[Theorem 16.2, p. 159]{Trefethen2005}
\begin{equation}
 	\left\| \Tt{E}^k \right\| \leq \frac{(\rho_{\varepsilon}(\Tt{E}))^{k+1}}{\varepsilon}
\end{equation}
\rev{for any $\varepsilon > 0$}.
But we also get a lower bound~\cite[Theorem 16.4, p. 160]{Trefethen2005}
\begin{equation}
	\sup_{k \geq 0} \left\| \Tt{E}^k \right\| \geq \frac{\rho_{\varepsilon}(\Tt{E}) - 1}{\varepsilon}
\end{equation}
\rev{for any $\varepsilon > 0$.}
Together, ``these estimates show that if the pseudo-spectra protrude significantly outside the unit disk \rev{in the sense that $\rho_{\varepsilon}(\Tt{E}) > 1 + \varepsilon$ for some $\varepsilon$}, there must be transient growth''~\cite[p. 143]{Trefethen2005}.
Since $\Tt{e}^k = \Tt{E}^k \Tt{e}_0$, this transient growth of $\Tt{E}^k$ is precisely the initial increase of Parareal error $\Tt{e}^k$ for initial value problems with imaginary eigenvalues documented by Gander and Vandewalle~\cite[Fig. 5.1]{GanderVandewalle2007_SISC}.
Note that the parameter $\varepsilon$ can be chosen more or less arbitrarily. 
We use a value of $\varepsilon = 0.1$ in all experiments reported here, which seems to give a good quantitative estimate of the initial behaviour of $\left\| \Tt{E}^k \right\|_2$.
\rev{From~\eqref{eq:intersect_pseudo_spec} it follows that $\lim_{\varepsilon \to 0} \rho_{\varepsilon}(\Tt{E}) = \rho(\Tt{E})$ and thus, for Parareal, $\lim_{\varepsilon \to 0} \rho_{\varepsilon}(\Tt{E})=0$.
Currently, we have no heuristic or theoretical insight why $\varepsilon=0.1$ gives a good estimate and we do not know how well this would generalize to problems other than linear advection.}
\subsection{Pseudo-spectrum of the Parareal iteration matrix}
Figure~\ref{fig:pseudo_spectrum} shows the pseudo-spectrum of $\Tt{E}$ for the four Parareal configurations from Table~\ref{tab:configs}.
For the two where Parareal converges monotonically, A (upper left) and D (lower right), the pseudo-spectrum is roughly circle-shaped.
However, $\sigma_1(\Tt{E})$ in A is much closer to a unit circle than in D, explaining faster convergence.
By contrast, the pseudo-spectrum for B (upper right) and C (lower left) is distorted with large protrusions outside the unit disk.
Consequently, we expect Parareal to not converge monotonically and that there will be substantial transient growth of $\Tt{E}^k$ over the first iterations. 
\begin{figure}[h!t]
	\centering
	\includegraphics[scale=.9]{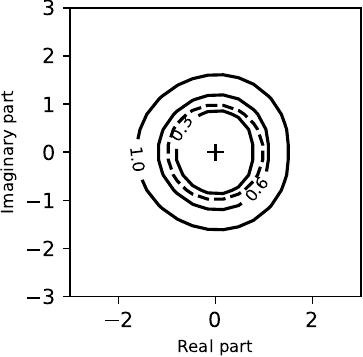}\hspace*{.5em}
	\includegraphics[scale=.9]{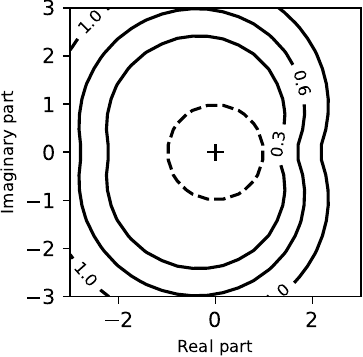}\\ 
	\includegraphics[scale=.9]{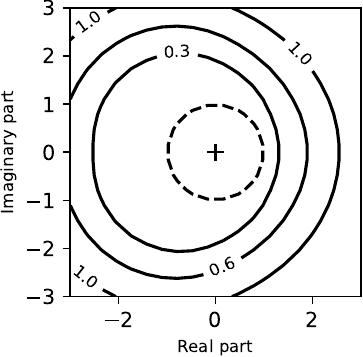}\hspace*{.5em}
	\includegraphics[scale=.9]{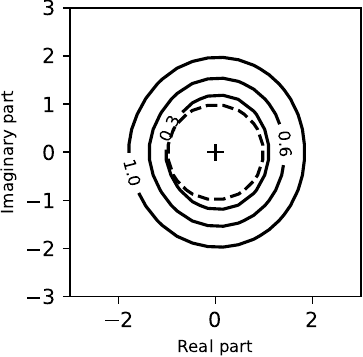} 
	\caption{Pseudo-spectrum of the Parareal iteration matrix for configurations A (upper left), B (upper right), C (lower left) and D (lower right). In all cases, $\left\| \Tt{E} \right\|_2 > 1$. If the pseudo-spectrum is close to circles $B_{\varepsilon}$ of radius $\varepsilon$, we expect monotonic convergence of $\Tt{E}^k$. By contrast, a substantially distorted pseudo-spectrum indicates initial transient growth and non-monotonic convergence of Parareal. The dashed line shows the unit circle for reference. \rev{For the pseudo-spectra without protrusions (upper left and lower right) we see monotonic convergence.}}
	\label{fig:pseudo_spectrum}
\end{figure}

Finally, Figure~\ref{fig:convergence} shows how the four configurations from Table~\ref{tab:configs} converge.
\revb{Blue dots indicate $\left\| \Tt{E}^k \right\|_2$, the norm of the Parareal iteration matrix to the power of the iteration number $k$}. 
As suggested by the pseudo-spectrum, configurations A and D (upper left and lower right) converge monotonically while B (upper right) and C (lower left) see substantial transient growth.
The red dashed line is $\rho_{\varepsilon}(\Tt{E})^k$ and the blue dash-dotted line $\left\| \Tt{E} \right\|_2^k$.
Note that the norm is larger than unity in all cases and the line always points upwards, even for configuration A which converges reasonably fast.
By contrast, the pseudo-spectral radius not only correctly predicts convergence for configurations A and D and divergence for B and C but also gives a reasonable quantitative estimate of the rate of convergence of Parareal in the first few iterations - the line with slope $\sigma_{\varepsilon}(\Tt{E})$ is roughly parallel to $\left\| \Tt{E} \right\|^k_2$ for the first four to five iterations.
\begin{figure}[ht!]
    \centering 
    \includegraphics[scale=1]{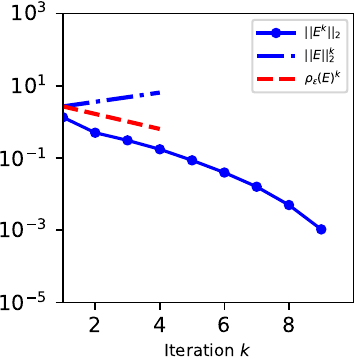}
    \includegraphics[scale=1]{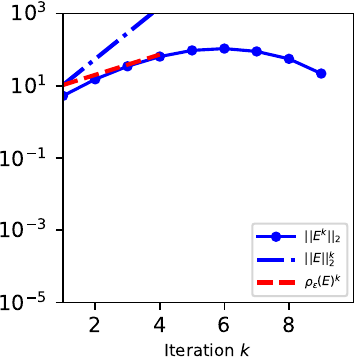}\newline
    \includegraphics[scale=1]{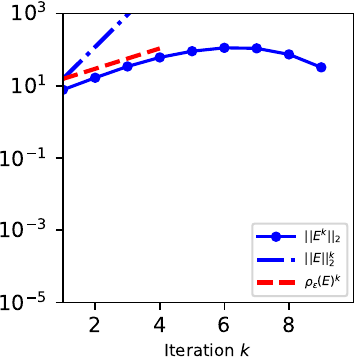}
    \includegraphics[scale=1]{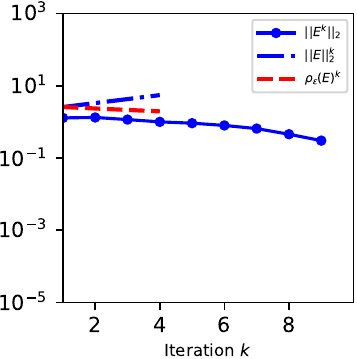}     
    \caption{Convergence of the four different Parareal configurations from Table~\ref{tab:configs} against the fine solution. Upper left: configuration A, upper right: configuration B, lower left: configuration C and lower right: configuration D. While the norm of the Parareal iteration matrix $\left\| \mathbf{E} \right\|_2$ is larger than unity for all configurations, the pseudo-spectral radius \rev{$\rho_{\varepsilon}(\mathbf{E})$} with $\varepsilon=0.1$ correctly distinguishes between initial increase of $\left\| \Tt{e}^k \right\|_2$ (upper right and lower left) and monotonic decrease (upper left and lower right). It also gives a good quantitative prediction of the early convergence rate of Parareal.}
    \label{fig:convergence}
\end{figure}

\revc{Figure~\ref{fig:expeuler} shows pseudo-spectrum (left) and convergence (right) for a Parareal configuration using explicit Euler with upwind finite differences as coarse and fine propagator.
The parameters are as in Configuration A, but with $T=10$, $N_c = 24$ and $N_f = 32$ such that the CFL numbers for both coarse and fine propagator are exactly equal to unity.
While both the coarse and fine propagator are exact on their respective mesh in this case, the necessity for interpolation and the lack of any temporal or spatial diffusion stops Parareal from converging.}
\begin{figure}[ht!]
	\centering
    \includegraphics[scale=.99]{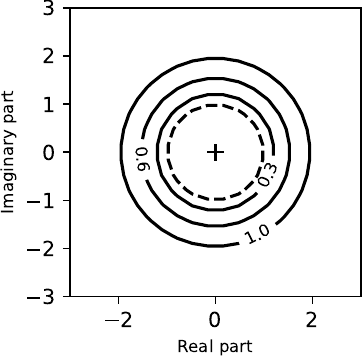}	  
    \includegraphics[scale=.99]{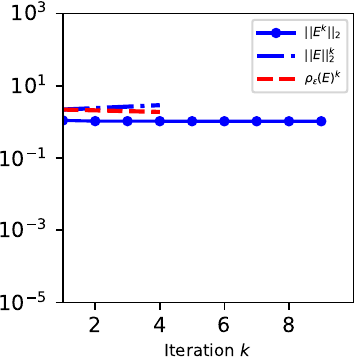} 
    \caption{\revc{Pseudo-spectrum (left) and convergence (right) for Parareal using explicit Euler as coarse and fine integrator and spatial upwind finite differences in space. The final time is extended to $T=10$ and the number of coarse time steps per slice is set to $N_c = 24$ and the number of fine steps per slice to $N_f = 32$, so that coarse and fine propagator both have a CFL number of exactly $1.0$. In this case, there is neither temporal nor spatial diffusion and $\left\| \mathbf{E}^k \right\|_2$ does not contract.}}
    \label{fig:expeuler}
\end{figure}


\rev{For comparison, Figure~\ref{fig:heat} shows the pseudo-spectrum (left) and convergence (right) of Parareal for the heat equation $u_t = u_{xx}$, discretized in space using centered finite differences and with trapezoidal rule as fine and implicit Euler as coarse propagator.
Resolutions are the same as in configuration A.
This equation has physical diffusion and thus, although the fine propagator has no numerical diffusion, the resulting pseudo-spectrum is very similar to the heavily diffusive configuration A for the non-diffusive linear advection problem.}
\begin{figure}[ht!]
	\centering
    \includegraphics[scale=.99]{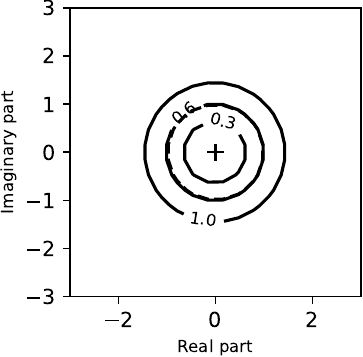}	  
    \includegraphics[scale=.99]{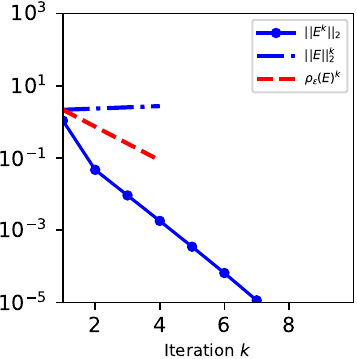} 
    \caption{\rev{Pseudo-spectrum (left) and convergence (right) for Parareal applied to the heat equation $u_t = u_{xx}$. The spatial discretization is a centered second-order finite-difference, the fine propagator is trapezoidal rule and the coarse propagator implicit Euler. Pseudo-spectrum and convergence are very similar to the diffusive configuration A for the linear advection equations.}}
    \label{fig:heat}
\end{figure}

\section{Conclusions}\label{sec:conc}
Performance of the Parareal parallel-in-time algorithm depends critically on its rapid convergence.
It is well-known that when applied to hyperbolic problems, convergence can be very poor and the error may increase over orders of magnitude before beginning to contract.
In particular, the use of a reduced spatial resolution (``spatial coarsening'') in the coarse propagator can have a significant impact on convergence.
\rev{However, when using discretizations with sufficient amounts of numerical diffusion, Parareal can nevertheless converge monotonically even when spatial coarsening is used.}
This raises the question how these can be theoretically distinguished from those with initial transient growth of the error.

For linear initial value problems with a normal system matrix, we prove a theorem that implies that the \rev{2-}norm of the Parareal iteration matrix cannot be used to assess convergence.
However, we show numerical results that suggest that the pseudo-spectrum can give a good and even quantitatively correct prediction of Parareal convergence over the first few iterations.
Our results suggest that the pseudo-spectrum and, in particular, the pseudo-spectral-radius are useful quantities to consider in the ongoing attempts to find efficient parallel-in-time methods for hyperbolic problems.
\rev{One interesting direction of future research would be to analyze optimized coarse propagatorsas derived, for example, by De Sterck et al. for MGRIT~\cite{DeSterckEtAl2021} or Jin et al. for Parareal~\cite{JinEtAl2025} to see if their demonstrated improved convergence is reflected in the pseudo-spectrum.
As long as the propagators can be cast in matrix form, the provided code could be used to analyze them.}

\begin{credits}
\subsubsection{\ackname} This project has received funding from the European High-Per\-formance Computing
Joint Undertaking (JU) under grant agreement No 955701. The JU receives support from the European Union’s Horizon 2020 research and innovation programme and Belgium, France, Germany,
and Switzerland. This project also received funding from the German Federal Ministry of Education and Research (BMBF) grant 16HPC048. The authors acknowledge the support by the Deutsche Forschungsgemeinschaft (DFG) within the Research Training Group GRK 2583 “Modeling, Simulation and Optimization of Fluid Dynamic Applications''. 

\subsubsection{\discintname}
The authors have no competing interests to declare that are
relevant to the content of this article.

\subsubsection{Data availability}
All numerical results shown in this paper can be reproduced using the provided code~\cite{Ruprecht2024}.

\end{credits}
%
%
%
\bibliographystyle{splncs04}
\bibliography{refs,refs2,pint}

\begin{thebibliography}{10}
\providecommand{\url}[1]{\texttt{#1}}
\providecommand{\urlprefix}{URL }
\providecommand{\doi}[1]{https://doi.org/#1}

\bibitem{AmodioBrugnano2009}
Amodio, P., Brugnano, L.: {Parallel solution in time of {ODE}s: some
  achievements and perspectives}. Applied Numerical Mathematics
  \textbf{59}(3--4),  424--435 (2009). \doi{10.1016/j.apnum.2008.03.024},
  \url{http://dx.doi.org/10.1016/j.apnum.2008.03.024}

\bibitem{AscherEtAl1997}
Ascher, U.M., Ruuth, S.J., Spiteri, R.J.: Implicit-explicit runge-kutta methods
  for time-dependent partial differential equations. Applied Numerical
  Mathematics  \textbf{25}(2),  151--167 (1997).
  \doi{10.1016/S0168-9274(97)00056-1},
  \url{https://doi.org/10.1016/S0168-9274(97)00056-1}

\bibitem{BurnsEtAl2020}
{Burns}, K.J., {Vasil}, G.M., {Oishi}, J.S., {Lecoanet}, D., {Brown}, B.P.:
  Dedalus: A flexible framework for numerical simulations with spectral
  methods. Physical Review Research  \textbf{2}(2),  023068 (2020).
  \doi{10.1103/PhysRevResearch.2.023068},
  \url{https://ui.adsabs.harvard.edu/abs/2020PhRvR...2b3068B}

\bibitem{BuvoliEtAl2020}
Buvoli, T., Minion, M.L.: Imex parareal integrators (2020),
  \url{http://arxiv.org/abs/2011.01604v1}

\bibitem{DeSterckEtAl2021}
De~Sterck, H., Falgout, R.D., Friedhoff, S., Krzysik, O.A., MacLachlan, S.P.:
  Optimizing multigrid reduction-in-time and parareal coarse-grid operators for
  linear advection. Numerical Linear Algebra with Applications  \textbf{28}(4),
   e2367 (2021). \doi{10.1002/nla.2367}, \url{https://doi.org/10.1002/nla.2367}

\bibitem{Durran2010}
Durran, D.R.: Numerical Methods for Fluid Dynamics, Texts in Applied
  Mathematics, vol.~32. Springer-Verlag New York (2010),
  \url{http://dx.doi.org/10.1007/978-1-4419-6412-0}

\bibitem{EckartYoung1936}
Eckart, C., Young, G.: The approximation of one matrix by another of lower
  rank. Psychometrika  \textbf{1},  211--218 (1936). \doi{10.1007/BF02288367},
  \url{https://doi.org/10.1007/BF02288367}

\bibitem{EmmettMinion2012}
Emmett, M., Minion, M.L.: {Toward an Efficient Parallel in Time Method for
  Partial Differential Equations}. Communications in Applied Mathematics and
  Computational Science  \textbf{7},  105--132 (2012).
  \doi{10.2140/camcos.2012.7.105},
  \url{http://dx.doi.org/10.2140/camcos.2012.7.105}

\bibitem{FalgoutEtAl2014_MGRIT}
Falgout, R.D., Friedhoff, S., Kolev, T.V., MacLachlan, S.P., Schroder, J.B.:
  {Parallel time integration with multigrid}. SIAM Journal on Scientific
  Computing  \textbf{36},  C635--C661 (2014). \doi{10.1137/130944230},
  \url{http://dx.doi.org/10.1137/130944230}

\bibitem{FischerEtAl2005}
Fischer, P.F., Hecht, F., Maday, Y.: A parareal in time semi-implicit
  approximation of the {N}avier-{S}tokes equations. In: Kornhuber, R., {et al.}
  (eds.) Domain Decomposition Methods in Science and Engineering. Lecture Notes
  in Computational Science and Engineering, vol.~40, pp. 433--440. Springer,
  Berlin (2005). \doi{10.1007/3-540-26825-1_44},
  \url{http://dx.doi.org/10.1007/3-540-26825-1_44}

\bibitem{Gander2008}
Gander, M.J.: {Analysis of the Parareal Algorithm Applied to Hyperbolic
  Problems using Characteristics}. Bol. Soc. Esp. Mat. Apl.  \textbf{42},
  21--35 (2008)

\bibitem{GuttelGander2013}
Gander, M.J., G{\"u}ttel, S.: {PARAEXP: A Parallel Integrator for Linear
  Initial-Value Problems}. SIAM Journal on Scientific Computing
  \textbf{35}(2),  C123--C142 (2013). \doi{10.1137/110856137},
  \url{http://dx.doi.org/10.1137/110856137}

\bibitem{GanderEtAl2021}
Gander, M.J., Liu, J., Wu, S.L., Yue, X., Zhou, T.: Paradiag: parallel-in-time
  algorithms based on the diagonalization technique (2021),
  \url{http://arxiv.org/abs/2005.09158}

\bibitem{GanderLunet2024}
Gander, M.J., Lunet, T.: Time Parallel Time Integration. SIAM Collection, SIAM
  (2024). \doi{10.1137/1.9781611978025},
  \url{https://epubs.siam.org/doi/book/10.1137/1.9781611978025}

\bibitem{GanderEtAl2023c}
Gander, M.J., Lunet, T., Pogo{\v{z}}elskyt{\.e}, A.: Convergence of parareal
  for a vibrating string with viscoelastic damping. In: Domain Decomposition
  Methods in Science and Engineering XXVI, pp. 435--442. Springer (2023).
  \doi{10.1007/978-3-030-95025-5_46},
  \url{https://doi.org/10.1007/978-3-030-95025-5_46}

\bibitem{GanderVandewalle2007_SISC}
Gander, M.J., Vandewalle, S.: {Analysis of the Parareal Time-Parallel
  Time-Integration Method}. SIAM Journal on Scientific Computing
  \textbf{29}(2),  556--578 (2007). \doi{10.1137/05064607X},
  \url{http://dx.doi.org/10.1137/05064607X}

\bibitem{GanderEtAl2014}
Gander, W., Gander, M.J., Kwok, F.: Scientific Computing - An Introduction
  Using Maple and MATLAB. Springer, Cham (2014)

\bibitem{Gear1988}
Gear, C.W.: {Parallel methods for ordinary differential equations}. CALCOLO
  \textbf{25}(1-2),  1--20 (1988). \doi{10.1007/BF02575744},
  \url{http://dx.doi.org/10.1007/BF02575744}

\bibitem{GillisShitov2019}
Gillis, N., Shitov, Y.: Low-rank matrix approximation in the infinity norm.
  Linear Algebra and its Applications  \textbf{581},  367--382 (2019).
  \doi{10.1016/j.laa.2019.07.017},
  \url{https://doi.org/10.1016/j.laa.2019.07.017}

\bibitem{GroneEtAl1987}
Grone, R., Johnson, C.R., Sa, E.M., Wolkowicz, H.: Normal matrices. Linear
  Algebra and its Applications  \textbf{87},  213--225 (1987).
  \doi{10.1016/0024-3795(87)90168-6},
  \url{https://doi.org/10.1016/0024-3795(87)90168-6}

\bibitem{HairerEtAl1996_stiff}
Hairer, E., Wanner, G.: Solving Ordinary Differential Equations {II}: Stiff
  problems. Springer-Verlag Berlin Heidelberg (1996),
  \url{http://dx.doi.org/10.1007/978-3-642-05221-7}

\bibitem{HornJohnson1991}
Horn, R.A., Johnson, C.R.: Topics in Matrix Analysis. Cambridge University
  Press (1991)

\bibitem{HornJohnson2012}
Horn, R.A., Johnson, C.R.: Matrix Analysis. Cambridge University Press (2012)

\bibitem{JinEtAl2025}
Jin, B., Lin, Q., Zhou, Z.: Optimizing coarse propagators in parareal
  algorithms. SIAM Journal on Scientific Computing  \textbf{47}(2),
  A735–A761 (March 2025). \doi{10.1137/23m1619733},
  \url{http://dx.doi.org/10.1137/23M1619733}

\bibitem{Kelley1995}
Kelley, C.T.: Iterative methods for linear and nonlinear equations. Society for
  Industrial and Applied Mathematics (1995)

\bibitem{LionsEtAl2001}
Lions, J.L., Maday, Y., Turinici, G.: {A "parareal" in time discretization of
  {PDE}'s}. Comptes Rendus de l'Académie des Sciences - Series I - Mathematics
   \textbf{332},  661--668 (2001). \doi{10.1016/S0764-4442(00)01793-6},
  \url{http://dx.doi.org/10.1016/S0764-4442(00)01793-6}

\bibitem{OngEtAl2016}
Ong, B.W., Haynes, R.D., Ladd, K.: Algorithm 965: {RIDC} methods: A family of
  parallel time integrators. ACM Trans. Math. Softw.  \textbf{43}(1),
  8:1--8:13 (2016). \doi{10.1145/2964377},
  \url{http://dx.doi.org/10.1145/2964377}

\bibitem{Ruprecht2018}
Ruprecht, D.: Wave propagation characteristics of parareal. Computing and
  Visualization in Science  \textbf{19}(1),  1--17 (2018).
  \doi{10.1007/s00791-018-0296-z},
  \url{https://doi.org/10.1007/s00791-018-0296-z}

\bibitem{Ruprecht2014_GAMM}
Ruprecht, D.: {Convergence of Parareal with spatial coarsening}. PAMM
  \textbf{14}(1),  1031--1034 (2014). \doi{10.1002/pamm.201410490},
  \url{http://dx.doi.org/10.1002/pamm.201410490}

\bibitem{Ruprecht2024}
Ruprecht, D.: Impact of spatial coarsening on parareal convergence (2025).
  \doi{10.5281/zenodo.15536881}, \url{https://doi.org/10.5281/zenodo.15536881}

\bibitem{Trefethen1996}
Trefethen, L.N.: Finite difference and spectral methods for ordinary and
  partial differential equations (1996),
  \url{http://people.maths.ox.ac.uk/trefethen/pdetext.html}, unpublished text

\bibitem{Trefethen2006}
Trefethen, L.N.: Pseudospectra of linear operators. SIAM Review  \textbf{39},
  383 -- 406 (2006)

\bibitem{Trefethen2005}
Trefethen, L.N., Embree, M.: Spectra and Pseudospectra: The Behavior of
  Nonnormal Matrices and Operators. Princeton University Press, Princeton and
  Oxford (2005)

\bibitem{CaklovicEtAl2025}
Čaklović, G., Lunet, T., Götschel, S., Ruprecht, D.: Improving efficiency of
  parallel across the method spectral deferred corrections. SIAM Journal on
  Scientific Computing  \textbf{47}(1),  A430–A453 (2025).
  \doi{10.1137/24M1649800}, \url{https://doi.org/10.1137/24M1649800}

\end{thebibliography}

\end{document}